\def\th@exercise{%
  \normalfont 
  \thm@headpunct{:}%
}
\theoremstyle{plain}
\newtheorem{thm}{Theorem}[section]
\newtheorem{lemm}[thm]{Lemma}
\newcommand{\lcm}{{\rm lcm}}
\pgfplotsset{compat=1.15}
\theoremstyle{remark}
\theoremstyle{plain}
\newtheoremstyle{note}
  {3pt}
  {3pt}
  {}
  {}
  {\itshape}
  {:}
  {.5em}
  {}
\newtheorem{note}{Note}
\newtheoremstyle{citing}
  {3pt}
  {3pt}
  {\itshape}
  {}
  {\bfseries}
  {.}
  {.5em}
  {\thmnote{#3}}
\theoremstyle{citing}
\newtheoremstyle{break}
  {9pt}
  {9pt}
  {\itshape}
  {}
  {\bfseries}
  {.}
  {\newline}
  {}
\let\lvert=|\let\rvert=|
\title{Conjugacy classes of completely reducible cyclic subgroups of $\mbox{GL}(2, q)$}
\author{Prashun Kumar \footnote{Dr. B. R. Ambedkar University Delhi, Delhi 110006, India; \ E-mails: prashunkumar.19@stu.aud.ac.in,  prashun07kumar@gmail.com.} 
\ and \ Geetha Venkataraman\footnote{Corresponding Author, Dr. B. R. Ambedkar University Delhi, Delhi 110006, India; E-mails: geetha@aud.ac.in, geevenkat@gmail.com.}}
\begin{document}
\fontfamily{cmr}\selectfont

\maketitle


\bigskip
\noindent
{\small{\bf ABSTRACT:}}
 Let $m$ be a positive integer such that $p \nmid m$ where $p$ is prime. In this paper we find the number of conjugacy classes of completely reducible cyclic subgroups in $\mbox{GL}(2,q)$ of order $m$, where $q$ is a power of $p$.

\medskip
\noindent
{\small{\bf Keywords}{:} }
general linear group, conjugacy class, cyclic subgroup, completely reducible subgroup.

\medskip
\noindent
{\small{\bf Mathematics Subject Classification-MSC2020}{:} }
20E45, 20H30, 20K01, 20K25

\baselineskip=\normalbaselineskip

\vspace{.25in}
\noindent
{\bf THIS IS AN EARLY VERSION OF THE PAPER. FOR THE FINAL VERSION SEE DOI: 10.1080/00927872.2023.2179634.}
\vspace{.25in}

\section{Introduction}

Conjugacy classes of special subgroups of groups have been studied over the years. These have not only been researched independently but also because they are useful in answering other questions related to groups. Some of the papers in the former category are \cite{M2008}, \cite{B2009}, \cite{LMS2005}, \cite{B2021}, \cite{BFG1995}. Order and bound on the number of conjugacy classes of maximal solvable subgroups of the symmetric and general linear groups have also played a role in enumeration of groups, see \cite[Ch 11 and Ch 14]{BNV2007}. 

In particular, the conjugacy classes of cyclic subgroups of the general linear group have been studied in relation with a family of codes applicable for communications on a random linear network coding channel, see \cite{MTR2011}. A formula for the number of conjugacy classes of reducible subgroups of $\mbox{GL}(2,t)$ of orders $p, p^2, pr$ where $p,r$ and $t$ are distinct primes has been given in \cite{DEP2022}. 

Note that there exist irreducible cyclic subgroups of order $m$ in $\mbox{GL}(n,q)$ if and only if $m \mid q^2-1$ and $m \nmid q-1$. Further, if there exist irreducible cyclic subgroups of a given order $m$ in $\mbox{GL}(n,q)$ then they form a single conjugacy class, see \cite[Theorem 2.3.3]{S1992}. In our paper, we find the conjugacy classes of completely reducible cyclic subgroups of $\mbox{GL}(2,q)$ which are not irreducible and provide a formula for the number of conjugacy classes of such subgroups when $q$ is a power of a prime. By the above discussion, if $m \mid q-1$ then any cyclic subgroup of order $m$ will be reducible. The main result we prove is given below. 

\smallskip 
\noindent
{\bf Theorem A}\label{conj_clses_of_cylc_sbgrps}
{\em Let $q = p^k$ where $p$ is a prime. Let $N(m)$ be the number of conjugacy classes of reducible cyclic subgroups of ${\rm GL}(2,q)$ of order $m$ where $m \mid q-1$. Let $m= p_0^{\beta_0}p_1^{\beta_1} \ldots p_r^{\beta_r}$ be the prime decomposition for $m$ where the $p_i$'s are odd primes for $i \geq 1$ and $p_0=2$.  Then
$$N(m) = \frac{1}{2}(\rho(m) + \delta(m)) $$

where $ \rho(m) = \displaystyle\prod_{i=0}^{r}(p_i^{\beta_i} + p_i ^{\beta_i -1})$  and $ \delta(m) =
\begin{cases}
 2^{r}\  & \quad if \quad 0 \leq \beta_0 \leq 1\\
 2^{r+1} &  \quad if \quad \beta_0 = 2 \\
 2^{r+2} &  \quad if \quad \beta_0 \geq 3.
\end{cases}
$ \\
If $m$ is odd then we let $ \rho(m) = \displaystyle\prod_{i=1}^{r}(p_i^{\beta_i} + p_i ^{\beta_i -1})$.}

\bigskip
Let $m$ be as above. Let ${\cal X} = \{ [H] \mid H \leq \mbox{GL}(2,q), H \cong {\mathbb{Z}}_m \mbox{ and } H \mbox{ is reducible} \}$. Then with the notation of Theorem A, we have that $N(m) = |{\cal X}|$.

\medskip
The paper is organised as follows. We prove Theorem A in Section \ref{conj_class_rcyc} and finally in Section \ref{Misc} we provide some additional comments and remarks. Throughout the paper, $p$ is a prime, $q$ is a power of $p$ and $\mathbb{F}_q$ is the finite field of order $q$.

\smallskip

Let $D(2,q)$, denote the subgroup of diagonal matrices of $\mbox{GL}(2,q)$. Any $d \in D(2,q)$ with diagonal entries $d_1$ and $d_2$ will be represented as $dia(d_1,d_2)$. Let $M(2,q) = D(2,q) \rtimes S_2$ be the group of all monomial matrices of $\mbox{GL}(2,q)$, where $S_n$ denotes the symmetric group of degree $n$. 
\smallskip
\noindent
Let $m$ be such that $m \mid q-1$. Let ${\cal Y} = \{[H]_M \mid H \leq D(2,q) \mbox{ and } H \cong {\mathbb{Z}}_m \}$ where $[H]_M$ denotes the conjugacy class of $H$ in $M(2,q)$. The aim is to establish a one-to-one correspondence between ${\cal X}$ and ${\cal Y}$. \\

 Lemmas  \ref{com_red_subgrp} and \ref{red_cylc_subgrps} are standard results and can be proven easily. 
 
 \begin{lemm}\label{com_red_subgrp}
Let $G\leq {\rm GL}(2,q)$ be a subgroup with $p \nmid |G|$. Further let $V = {(\mathbb{F}_q)}^2$ be the natural $\mathbb{F}_qG$-module. If $G$ is reducible then $G$ conjugates into the group $D(2,q)$ of all diagonal matrices. Further  if $G$ is cyclic of order $m$ then $m \mid q-1$.

\end{lemm}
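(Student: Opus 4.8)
The plan is to use Maschke's theorem to upgrade ``reducible'' to ``decomposable'', and then read off the diagonal form from a suitable basis. Since $p \nmid |G|$, Maschke's theorem applies over $\mathbb{F}_q$ (it holds over any field whose characteristic does not divide $|G|$; algebraic closedness is not needed), so the natural module $V = (\mathbb{F}_q)^2$ is a semisimple $\mathbb{F}_q G$-module. Because $G$ is reducible, $V$ has a submodule $W$ with $0 \neq W \neq V$, hence $\dim_{\mathbb{F}_q} W = 1$; by semisimplicity there is a complementary $G$-submodule $W'$, necessarily also $1$-dimensional, with $V = W \oplus W'$. (Note a $1$-dimensional $\mathbb{F}_q G$-module is automatically irreducible, so no further decomposition is required.)

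Next I would pick $w \in W \setminus \{0\}$ and $w' \in W' \setminus \{0\}$; then $\{w, w'\}$ is an $\mathbb{F}_q$-basis of $V$, and each of $\mathbb{F}_q w$ and $\mathbb{F}_q w'$ is $G$-invariant, so every $g \in G$ multiplies $w$ and $w'$ by scalars. Letting $P \in {\rm GL}(2,q)$ be the change-of-basis matrix whose columns are $w$ and $w'$ (in standard coordinates), we get $P^{-1} G P \leq D(2,q)$, which is the first assertion.

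For the last sentence, suppose in addition that $G$ is cyclic of order $m$; after conjugating into $D(2,q)$ we may write $G = \langle dia(a,b)\rangle$ with $a, b \in \mathbb{F}_q^{\times}$. The order of $dia(a,b)$ equals $\lcm(\mathrm{ord}(a), \mathrm{ord}(b))$, and since $\mathbb{F}_q^{\times}$ is cyclic of order $q-1$ we have $\mathrm{ord}(a) \mid q-1$ and $\mathrm{ord}(b) \mid q-1$, whence $m = \lcm(\mathrm{ord}(a), \mathrm{ord}(b)) \mid q-1$.

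There is no real obstacle here: the whole argument is an application of Maschke's theorem plus a dimension count, and the only point worth stating explicitly is that semisimplicity of $V$ is available over the non-algebraically-closed field $\mathbb{F}_q$ precisely because the hypothesis $p \nmid |G|$ is exactly what Maschke's theorem needs.
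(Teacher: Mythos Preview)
Your argument is correct: Maschke's theorem over $\mathbb{F}_q$ (valid since $p\nmid |G|$) gives a $G$-invariant complement to the $1$-dimensional submodule, a basis adapted to this decomposition conjugates $G$ into $D(2,q)$, and the divisibility $m\mid q-1$ follows from $D(2,q)\cong \mathbb{F}_q^{\times}\times\mathbb{F}_q^{\times}$. The paper does not actually give a proof of this lemma---it simply declares it a standard result---so your write-up is exactly the kind of routine verification the authors had in mind.
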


\begin{lemm}
\label{red_cylc_subgrps}
Let $m$ be a positive integer such that $m \mid q-1$. Then there exists a reducible cyclic subgroup $H \leq {\rm GL}(2,q)$ of order $m$. Further, $D(2,q)$ has a unique subgroup $U \cong \mathbb{Z}_m \times \mathbb{Z}_m$ such that every cyclic subgroup in $D(2,q)$ of order $m$ is contained in $U$.
\end{lemm}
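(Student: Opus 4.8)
\medskip
\noindent
The plan is to carry out everything inside the diagonal group $D(2,q)$, using the natural isomorphism $D(2,q) \cong \mathbb{F}_q^* \times \mathbb{F}_q^*$ given by $dia(d_1,d_2) \mapsto (d_1,d_2)$. Since $\mathbb{F}_q^*$ is cyclic of order $q-1$ and $m \mid q-1$, it has a unique subgroup $C$ of order $m$, namely $C = \{x \in \mathbb{F}_q^* : x^m = 1\}$; this standard fact about cyclic groups (for $d \mid n$, the set of $d$-th roots of unity in a cyclic group of order $n$ is its unique subgroup of order $d$) is the only external input.

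For the existence statement I would fix a generator $\zeta$ of $C$, so that $dia(\zeta,1)$ has order exactly $m$. Setting $H = \langle dia(\zeta,1)\rangle$ then gives a cyclic subgroup of ${\rm GL}(2,q)$ of order $m$; since every element of $H$ is diagonal, $H$ fixes each of the coordinate lines $\langle e_1 \rangle$ and $\langle e_2 \rangle$ of $V = (\mathbb{F}_q)^2$, so $V$ is not an irreducible $\mathbb{F}_q H$-module and $H$ is reducible.

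For the second assertion I would take $U := \{g \in D(2,q) : g^m = 1\}$. Under the identification above this is precisely $C \times C$, so $U$ is a subgroup, $U \cong \mathbb{Z}_m \times \mathbb{Z}_m$, and $|U| = m^2$. If $H \leq D(2,q)$ is any cyclic subgroup of order $m$ then $h^m = 1$ for every $h \in H$, whence $H \subseteq U$, which is the containment claimed. For uniqueness, if $U' \leq D(2,q)$ satisfies $U' \cong \mathbb{Z}_m \times \mathbb{Z}_m$, then every element of $U'$ has order dividing $m$, so $U' \subseteq U$, and comparing orders ($|U'| = m^2 = |U|$) forces $U' = U$; in particular $U$ is the only candidate and it trivially contains every cyclic subgroup of order $m$. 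The argument is entirely routine; the only point that needs a moment's care is the identification $U = C \times C$ together with the characterisation of $C$ as the $m$-torsion of the cyclic group $\mathbb{F}_q^*$, and I do not anticipate any real obstacle.
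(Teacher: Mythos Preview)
Your argument is correct and complete. The paper itself does not give a proof of this lemma at all; it simply remarks that Lemmas~\ref{com_red_subgrp} and~\ref{red_cylc_subgrps} ``are standard results and can be proven easily,'' and your write-up is precisely the kind of routine verification the authors had in mind.
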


\begin{lemm}\label{sbgrps_of_mono_matx}
Let $M(2,q) = D(2,q) \rtimes \langle a \rangle$ be the subgroup of monomial matrices in ${\rm GL}(2,q)$, where $a =$ $\begin{pmatrix}
    0 & 1\\
    1 & 0
\end{pmatrix}$. If $H_1,H_2 \leq D(2,q)$ are two distinct subgroups such that $H_2 = g{H_1}g^{-1}$ for some $g\in {\rm GL}(2,q)$, then $g \in M(2,q)$.
\end{lemm}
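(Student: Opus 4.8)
The plan is to use the rigidity of the eigenspace decomposition of a non-scalar diagonal matrix: conjugation by $g$ must send the two coordinate axes to the two coordinate axes, and hence $g$ either preserves or swaps them. First I would dispose of the degenerate possibility. If every element of $H_1$ were a scalar matrix, then $H_1$ would be central in $\mathrm{GL}(2,q)$, so $H_2 = gH_1g^{-1} = H_1$, contradicting $H_1 \neq H_2$. Thus the hypothesis that the two subgroups are distinct is exactly what guarantees that $H_1$ contains a non-scalar element $d_1 = dia(\lambda,\mu)$ with $\lambda \neq \mu$.

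Next I would observe that $g d_1 g^{-1} \in H_2 \leq D(2,q)$ is a diagonal matrix similar to $d_1$, hence with the same (multiset of) eigenvalues $\{\lambda,\mu\}$. Since a $2\times 2$ diagonal matrix is determined by its ordered pair of diagonal entries and these entries must be $\lambda$ and $\mu$ in some order, we get $g d_1 g^{-1} = d_1$ or $g d_1 g^{-1} = dia(\mu,\lambda) = a d_1 a^{-1}$, where $a$ is the transposition permutation matrix (and $a^2 = I$).

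In the first case $g$ centralizes the non-scalar diagonal matrix $d_1$. Writing $g = (g_{ij})$ and comparing entries in $g d_1 = d_1 g$ gives $g_{ij}(\text{(the $j$-th diagonal entry)} - \text{(the $i$-th diagonal entry)}) = 0$, so $g_{12} = g_{21} = 0$ because the two diagonal entries of $d_1$ differ; hence $g \in D(2,q) \subseteq M(2,q)$. In the second case the same computation applied to $a^{-1}g$, which centralizes $d_1$, shows $a^{-1}g \in D(2,q)$, so $g \in a D(2,q) \subseteq M(2,q)$. In either case $g \in M(2,q)$, as required.

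I do not expect a real obstacle here; the proof is short. The one point worth stating carefully is that the conclusion genuinely fails for subgroups of scalar matrices, so the role of the hypothesis $H_1 \neq H_2$ must be pinned down precisely (it forces $H_1$ to be non-scalar), and the centralizer-of-a-non-scalar-diagonal-matrix computation should be written out explicitly on a general $2\times 2$ matrix rather than merely quoted.
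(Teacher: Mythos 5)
Your proof is correct and follows essentially the same route as the paper: both arguments extract a non-scalar element of $H_1$ (using distinctness of $H_1$ and $H_2$ to rule out the scalar case), note that its conjugate under $g$ must again be diagonal, and deduce by direct computation that $g$ is monomial. Your version merely spells out the ``direct calculation'' the paper leaves implicit, organizing it via the two possible orderings of the eigenvalues and the centralizer of a non-scalar diagonal matrix.
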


\begin{proof} Clearly $H_1$ is not contained in the center of ${\rm GL}(2,q)$ and hence it contains a non-scalar matrix $h_1$. Note that the anti-diagonal entries of  $g{h_1}g^{-1}$ have to be zero. Direct calculation then gives $g \in M(2,q)$.

\end{proof}

\section{Conjugacy classes of reducible cyclic subgroups of ${\rm GL}(2,q)$ whose order is coprime to $p$}
\label{conj_class_rcyc}
We shall prove Theorem A in this section. As stated earlier the aim of Theorem A is to find the size of ${\cal X}$. Clearly ${\rm GL}(2, q)$ acts by conjugation on all its cyclic subgroups of order $m$ where $m \mid q-1$ and ${N(m)} =|{\cal X}|$ is just the number of orbits under this action. We will use Burnside's Lemma \cite[Theorem 3.22, page 58]{JR1995} for counting the required orbits. However we prove a few standard results that will enable the use of Burnside's Lemma.

\begin{lemm}\label{elmts_of_odr_2}
Let $m = 2^{\beta_0}p_1^{\beta_1} \ldots p_r^{\beta_r}$ where $\beta_0 \geq 0$, $\beta_i \geq 1$ for $1\leq i \leq r$ and $p_i$ for $1 \leq i \leq r$ are odd primes then the number of elements of order 2 in $ \rm{Aut}(\mathbb{Z}_m)$, the automorphism group of $\mathbb{Z}_m$ is 
\[ 
\begin{cases}
  2^r -1 & {\rm when \ } 0 \leq \beta_0 \leq 1,\\
  2^{r+1} -1 & {\rm when \ } \beta_0 =2, \\
  2^{r+2} -1 & {\rm when \ } \beta_0 \geq 3.
\end{cases}
\]
\end{lemm}

\begin{proof} Suppose $P = P_1 \times P_2 \times \ldots \times P_k$ where $P_i$'s are cyclic groups. Then it can be seen that there are exactly $2^k -1$ choices of such $g$ and by \cite[Theorem 7.3, page 157]{JR1995} we get the desired formula.
\end{proof}

\begin{note}\label{spcl_elmnts_of_odr_2}
Let $m = 2^{\beta_0}$ where $\beta_0 \geq 1$. Then $\rm{Aut}(\mathbb{Z}_m)$ is trivial for $\beta_0 =1$, cyclic of order $2$ for $\beta_0 =2$ and isomorphic to $\mathbb{Z}_{2^{\beta_0-2}} \times \mathbb{Z}_2$. Thus the number of elements of order $2$ in $\rm{Aut}(\mathbb{Z}_m)$ is

\[ 
\begin{cases}
   0   &      {\rm when \ } \beta_0 = 1,\\
  1    &       {\rm when \ } \beta_0 =2, \\
   3 &       {\rm when \ } \beta_0 \geq 3.
\end{cases}
\]

\end{note}
For the result given below we shall follow the notation established in Lemma \ref{sbgrps_of_mono_matx}.

\begin{lemm}
\label{elmts_of_Fix(da)}
Let $m$ be a positive integer such that $m \mid q-1$. Let 
$$\hat{\cal Y} = \{ H \leq D(2,q) \mid H \cong {\mathbb{Z}}_m  \}.$$ Let ${\rm Fix}(g)$ denote the set of all elements of $\hat{\cal Y}$, fixed by $g \in M(2, q)$, under the conjugation action of $M(2, q)$ on $\hat{\cal Y}$. Let $d \in D(2,q)$ and $H \in \hat{\cal Y}$. 

\begin{enumerate}[$(a)$]
    \item $H \in {\rm Fix}(a)$ if and only if $H = \langle dia(\lambda, \lambda^l) \rangle$ where $\lambda \in {\mathbb{F}_q}^{*}$ with $o(\lambda)=m$ and $l \in \rm{Aut}(\mathbb{Z}_m)$ with $l^2= 1$.
    \item Let $H_1 = \langle dia(\lambda_1, {\lambda_1}^{l_1}) \rangle$ and $H_2 = \langle dia(\lambda_2, {\lambda_2}^{l_2}) \rangle$ where $\lambda_i \in {\mathbb{F}_q}^{*}$ and $l_i  \in \rm{Aut}(\mathbb{Z}_m)$ satisfy $o(\lambda_i) = m$ and $l_i^{2} = 1$ for $i = 1, 2$. Then $H_1 = H_2$ if and only if $ l_1 = l_2$.
\end{enumerate}

\end{lemm}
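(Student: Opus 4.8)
The plan is to analyze the conjugation action of $a$ on subgroups $H \le D(2,q)$ explicitly, using the parametrization of diagonal subgroups isomorphic to $\mathbb{Z}_m$. For part $(a)$: conjugation by $a$ swaps the two diagonal entries, i.e. $a\, dia(x,y)\, a^{-1} = dia(y,x)$. So $H \in {\rm Fix}(a)$ means $H$ is stable under this swap. Since $m \mid q-1$ and $H \cong \mathbb{Z}_m$ is cyclic, $H$ has a generator; I would argue that any generator of $H$ must have at least one entry of order exactly $m$ (otherwise $H$ would fail to have an element of order $m$, using that the order of $dia(x,y)$ is $\lcm(o(x),o(y))$ and, by Lemma \ref{red_cylc_subgrps}, $H$ sits inside the unique $U \cong \mathbb{Z}_m \times \mathbb{Z}_m$). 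After conjugating within $D(2,q)$ if necessary — or rather, by choosing the generator appropriately — I can write a generator as $dia(\lambda, \mu)$ with $o(\lambda) = m$; since $\mu \in \langle \lambda \rangle$ (both lie in the unique cyclic subgroup of order $m$ of $\mathbb{F}_q^*$), we have $\mu = \lambda^l$ for a unique $l \in \mathbb{Z}/m\mathbb{Z}$, and $o(\mu) = m$ forces $\gcd(l,m) = 1$, i.e. $l \in {\rm Aut}(\mathbb{Z}_m) \cong (\mathbb{Z}/m\mathbb{Z})^*$. Then $a$-stability of $H = \langle dia(\lambda,\lambda^l)\rangle$ says $dia(\lambda^l, \lambda) \in H$, so $dia(\lambda^l,\lambda) = dia(\lambda,\lambda^l)^j = dia(\lambda^j, \lambda^{lj})$ for some $j$; comparing first entries gives $j \equiv l$, and comparing second entries gives $l \cdot l \equiv 1 \pmod m$, i.e. $l^2 = 1$ in ${\rm Aut}(\mathbb{Z}_m)$. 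Conversely, if $l^2 = 1$ then the swap sends the generator to $dia(\lambda^l,\lambda) = dia(\lambda,\lambda^l)^l \in H$, so $H \in {\rm Fix}(a)$.

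For part $(b)$: the "if" direction is immediate, since $l_1 = l_2$ and both $\lambda_1, \lambda_2$ are generators of the unique order-$m$ cyclic subgroup $C$ of $\mathbb{F}_q^*$, so $\lambda_2 = \lambda_1^t$ with $\gcd(t,m)=1$, whence $dia(\lambda_2,\lambda_2^{l_2}) = dia(\lambda_1,\lambda_1^{l_1})^t$ generates the same group. For "only if": suppose $H_1 = H_2 =: H$. Then $dia(\lambda_2, \lambda_2^{l_2})$ is a generator of $H$, hence equals $dia(\lambda_1, \lambda_1^{l_1})^s = dia(\lambda_1^s, \lambda_1^{s l_1})$ for some $s$ coprime to $m$. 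Matching entries: $\lambda_2 = \lambda_1^s$ and $\lambda_2^{l_2} = \lambda_1^{s l_1}$. Substituting, $\lambda_1^{s l_2} = \lambda_1^{s l_1}$, and since $o(\lambda_1) = m$ and $\gcd(s,m)=1$ we may cancel $s$ to get $l_1 \equiv l_2 \pmod m$, i.e. $l_1 = l_2$ in ${\rm Aut}(\mathbb{Z}_m)$.

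The only genuinely delicate point is the normalization in part $(a)$: one must be careful that a generator of an $a$-stable $H$ can indeed be taken in the form $dia(\lambda,\lambda^l)$ with $o(\lambda) = m$ — i.e. that not both entries can have order strictly less than $m$. This follows because $H \cong \mathbb{Z}_m$ contains an element of order $m$, and for $dia(x,y) \in D(2,q)$ one has $o(dia(x,y)) = \lcm(o(x),o(y))$; if $o(x), o(y) < m$ for every element of $H$... but in fact it suffices that some element has order $m$, and that element has $\lcm(o(x),o(y)) = m$, so at least one of $o(x), o(y)$ equals $m$ would NOT quite follow ($\lcm$ of two proper divisors can be $m$) — so instead I would observe that $\mathbb{F}_q^*$ is cyclic, hence both coordinates of any element of $H$ lie in the \emph{cyclic} group $C$ of order $m$, and the projection $\pi_1 \colon H \to C$ to the first coordinate is a homomorphism; if $\pi_1$ is not surjective then its image is a proper subgroup, and one replaces the roles of the coordinates (using $a$-stability: if $\pi_1$ is not onto $C$ then, after swapping, $\pi_2$ might be — and if neither is onto, $H$ embeds in a proper subgroup of $C \times C$ of exponent $< m$, contradicting $H \cong \mathbb{Z}_m$). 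Once one coordinate projection is onto $C$, pick a generator of $H$ mapping to a generator $\lambda$ of $C$; that generator is $dia(\lambda, \mu)$ with $\mu \in C$, and the rest proceeds as above. This case analysis is routine but should be spelled out carefully.
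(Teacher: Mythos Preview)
Your core computation matches the paper's, but the paper is more direct and sidesteps the spot where your argument actually slips. Rather than first normalizing a generator to have first entry of order $m$, the paper takes an \emph{arbitrary} generator $dia(\lambda,\lambda')$ of $H$; since $aHa^{-1}=H$, the swap $dia(\lambda',\lambda)$ is again a generator, hence equals $dia(\lambda,\lambda')^{l}$ for some unit $l \bmod m$. This yields $\lambda'=\lambda^{l}$ and $\lambda=(\lambda')^{l}$ simultaneously; from $\lambda'=\lambda^{l}$ one gets $o(\lambda')\mid o(\lambda)$, so $o(\lambda)=\lcm(o(\lambda),o(\lambda'))=m$ and $m\mid l^{2}-1$ drop out together, with no separate projection argument needed.

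Your normalization step contains a genuine gap: the claim that if neither coordinate projection $\pi_i$ maps $H$ onto $C$ then $H$ has exponent strictly less than $m$ is false in general (take $m=6$ and $H=\langle dia(\lambda^{3},\lambda^{2})\rangle$ with $\lambda$ a generator of $C$; both projections are proper but $H\cong\mathbb{Z}_6$). What would rescue this is that for $a$-stable $H$ one has $\pi_1(H)=\pi_2(H)$, so both projections land in the \emph{same} proper cyclic subgroup of $C$; you invoke $a$-stability at that point but do not use it in this way. The line ``$o(\mu)=m$ forces $\gcd(l,m)=1$'' is also unjustified where you write it (you have not shown $o(\mu)=m$), though your later computation $l^{2}\equiv 1\pmod m$ retroactively makes $l$ a unit. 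Part $(b)$ is fine and agrees with the paper.
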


\begin{proof}

Let $H = \langle dia(\lambda,\lambda') \rangle$ be in ${\rm Fix}(a)$. Since $aHa^{-1}=H$ we get $dia(\lambda',\lambda) = {dia(\lambda,\lambda')}^l$ for some $l \in \rm{Aut}(\mathbb{Z}_m)$. So $\lambda = {(\lambda')}^l$  and $\lambda' = \lambda^l$. Using this we get that $m = \lcm (o(\lambda),o(\lambda')) \mid l^2-1$. Hence $dia(\lambda,\lambda') = dia(\lambda,{\lambda}^l)$. So $o(\lambda) = m$ and $l^2= 1$. 

Conversely if $H = \langle dia(\lambda, \lambda^l) \rangle$ where $\lambda \in {\mathbb{F}_q}^{*}$ with $o(\lambda)=m$ and $l \in \rm{Aut}(\mathbb{Z}_m)$ with $l^2 = 1\bmod{m}$, then $H \cong \mathbb{Z}_m$ and $aha^{-1} = h^l$ where $h = dia(\lambda, \lambda^l)$. Hence $H \in {\rm Fix}(da)$.

\smallskip
\noindent 
If $H_1 = H_2$, then $dia(\lambda_1, {\lambda_1}^{l_1}) = {dia(\lambda_2, {\lambda_2}^{l_2})}^{j}$ for some $j \in \rm{Aut}(\mathbb{Z}_m)$. Using this it is not difficult to show that $l_1=l_2$.
Now suppose $l_1=l_2$, then since $\lambda_2 \in \langle \lambda_1 \rangle $ we have $\lambda_2 = {\lambda_1}^{j}$ for some $j \in \rm{Aut}(\mathbb{Z}_m)$ which implies $dia(\lambda_2, {\lambda_2}^{l_2}) \in H_1$. Hence $H_1 = H_2$. 

\end{proof}

In the result below, we continue with the notation established in the above lemma. While we do not require Lemma \ref{Fix_da} for the proof of Theorem A, it is of independent interest.

\begin{lemm}\label{Fix_da}
  Let $d \in D(2, q)$. The function from ${\rm Fix}(a)$ to $\rm{Aut}(\mathbb{Z}_m)$ defined by $\langle dia(\lambda,{\lambda}^{l}) \rangle \longmapsto l$ is bijective if and only if $m = 2^{t}3^{s}$ where $0\leq t \leq 3$ and $0 \leq s \leq 1$.

\end{lemm}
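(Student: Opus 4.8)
The plan is to make the bijectivity of the map $\langle dia(\lambda,\lambda^l)\rangle \mapsto l$ into a counting comparison. By Lemma~\ref{elmts_of_Fix(da)}$(a)$ the map is well-defined and its image lies in the set $S = \{ l \in \mathrm{Aut}(\mathbb{Z}_m) : l^2 = 1 \}$ of involutions together with the identity; by Lemma~\ref{elmts_of_Fix(da)}$(b)$ the map is injective. So the map is a bijection onto $\mathrm{Aut}(\mathbb{Z}_m)$ precisely when two conditions hold simultaneously: first, every element of $\mathrm{Aut}(\mathbb{Z}_m)$ squares to the identity (otherwise the image misses a non-involution), and second, every $l \in S$ is actually attained, i.e. for every such $l$ there is a $\lambda \in \mathbb{F}_q^{*}$ of order exactly $m$ — but since $m \mid q-1$ such a $\lambda$ always exists, so the second condition is automatic. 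Hence the real content is: \emph{the map is bijective if and only if $\mathrm{Aut}(\mathbb{Z}_m)$ is an elementary abelian $2$-group} (including the trivial group).

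Next I would translate ``$\mathrm{Aut}(\mathbb{Z}_m)$ is elementary abelian of exponent dividing $2$'' into an arithmetic condition on $m$. Using the CRT decomposition $\mathrm{Aut}(\mathbb{Z}_m) \cong \prod_i \mathrm{Aut}(\mathbb{Z}_{p_i^{\beta_i}})$ and the standard structure $\mathrm{Aut}(\mathbb{Z}_{p^{\beta}}) \cong \mathbb{Z}_{p^{\beta-1}(p-1)}$ for odd $p$ (cyclic of order $\varphi(p^\beta)$), and the $2$-adic case from Note~\ref{spcl_elmnts_of_odr_2} ($\mathrm{Aut}(\mathbb{Z}_{2^{\beta_0}})$ trivial for $\beta_0 \le 1$, $\mathbb{Z}_2$ for $\beta_0 = 2$, $\mathbb{Z}_{2^{\beta_0-2}}\times\mathbb{Z}_2$ for $\beta_0 \ge 3$), I would argue factor by factor. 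For an odd prime $p_i$, the cyclic group $\mathbb{Z}_{p_i^{\beta_i-1}(p_i-1)}$ has exponent dividing $2$ iff $p_i^{\beta_i-1}(p_i-1) \mid 2$, which forces $\beta_i = 1$ and $p_i - 1 \in \{1,2\}$, i.e. $p_i = 3$; so at most one odd prime may divide $m$, it must be $3$, and to the first power. For the $2$-part, $\mathrm{Aut}(\mathbb{Z}_{2^{\beta_0}})$ has exponent dividing $2$ iff $\beta_0 \le 3$ (for $\beta_0 \ge 4$ the factor $\mathbb{Z}_{2^{\beta_0-2}}$ is too big). Combining: the map is bijective iff $m = 2^{t}3^{s}$ with $0 \le t \le 3$ and $0 \le s \le 1$, which is exactly the claim.

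For the organization of the write-up I would first dispose of surjectivity-onto-$S$ and injectivity by citing the previous lemma, reducing everything to the group-theoretic equivalence ``bijective $\iff$ $\mathrm{Aut}(\mathbb{Z}_m)$ has exponent $\le 2$,'' then prove the two directions of the arithmetic characterization. The ``if'' direction is a direct check: for $m = 2^t 3^s$ in the allowed range, $\mathrm{Aut}(\mathbb{Z}_m)$ is one of a short list of elementary abelian $2$-groups, so every element is an involution or the identity and the map hits all of $\mathrm{Aut}(\mathbb{Z}_m)$. The ``only if'' direction is the contrapositive argument above: if $m$ has any other shape then $\mathrm{Aut}(\mathbb{Z}_m)$ contains an element of order $> 2$, which is then not in the image.

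I do not anticipate a serious obstacle here; the only point requiring a little care is making sure the phrase ``the map is onto $\mathrm{Aut}(\mathbb{Z}_m)$'' is handled correctly — one must note that every candidate $l$ with $l^2=1$ really does arise (guaranteed because $\mathbb{F}_q^*$ is cyclic of order divisible by $m$, so an element of order exactly $m$ exists), so that bijectivity is equivalent to $S = \mathrm{Aut}(\mathbb{Z}_m)$ rather than to some weaker statement. After that, the proof is a bookkeeping exercise with the known structure of $\mathrm{Aut}(\mathbb{Z}_m)$.
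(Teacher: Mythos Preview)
Your proposal is correct and follows essentially the same route as the paper: both reduce bijectivity (via Lemma~\ref{elmts_of_Fix(da)}) to the condition that every element of $\mathrm{Aut}(\mathbb{Z}_m)$ satisfies $l^2=1$, and then use the standard structure of $\mathrm{Aut}(\mathbb{Z}_{p^\beta})$ factor by factor to pin down $m = 2^t 3^s$ with $0\le t\le 3$, $0\le s\le 1$. Your write-up is in fact a bit more explicit than the paper's about why surjectivity onto the involution set $S$ is automatic (existence of $\lambda$ of order $m$ since $m\mid q-1$), but the argument is the same in substance.
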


\begin{proof} Part (a) of Lemma \ref{elmts_of_Fix(da)} gives us that
$${\rm Fix}(a) = \{ \langle dia(\lambda,{\lambda}^{l}) \rangle \mid \lambda \in {\mathbb{F}_q}^{*}, o(\lambda) = m, \ l \in \rm{Aut}(\mathbb{Z}_m) \mbox{ and } l^2 = 1 \}.$$ Using Part (b) and above we do know that $\langle dia(\lambda,{\lambda}^{l}) \rangle \longmapsto l$ defines a function from ${\rm Fix}(da)$ to $\rm{Aut}(\mathbb{Z}_m)$ which is injective. Clearly the above function will be onto if and only if every element of $\rm{Aut}(\mathbb{Z}_m)$ satisfies $l^2 = 1$. 

Let $p$ be an odd prime and let $\alpha \geq 2$. If $p^{\alpha}| m$ and $p^{\alpha +1} \nmid m$   then $\rm{Aut}(\mathbb{Z}_{p^\alpha}) \leq \rm{Aut}(\mathbb{Z}_m)$ and $\rm{Aut}(\mathbb{Z}_{p^\alpha})$ contains an element of order $p$. Now let us assume that $m = 2^tp$ with $t \geq 3$. Then $\rm{Aut}(\mathbb{Z}_p) \cong \mathbb{Z}_{p-1}$ and $\rm{Aut}(\mathbb{Z}_{2^t}) \cong \mathbb{Z}_2\times \mathbb{Z}_{2^{t -2}}$ are subgroups of $\rm{Aut}(\mathbb{Z}_m)$. So if $p > 3$ or $t > 3$, then these subgroups contain elements of order greater than $2$. So if every element of $\rm{Aut}(\mathbb{Z}_m)$ satisfies $l^2 = 1$ then $m = 2^{t}3^{s}$ where $0\leq t \leq 3$ and $0 \leq s \leq 1$.

\smallskip
\noindent
Conversely let us assume that $m = 2^{t}3^{s}$ where $0\leq t \leq 3$ and $0 \leq s \leq 1$. Then
\[ \rm{Aut}(\mathbb{Z}_m) \cong
\begin{cases}
    1   & s = 0, \, 0 \leq t \leq 1;\\
    \mathbb{Z}_2 & s = 0, \, t = 2 \quad \rm{or} \quad s = 1, \,   0 \leq t \leq 1;\\
    \mathbb{Z}_2\times \mathbb{Z}_2 & s = 0, \, t = 3 \quad \rm{or} \quad s = 1, \, t = 2;\\
    \mathbb{Z}_2\times \mathbb{Z}_2 \times \mathbb{Z}_2 & s = 1, \, t = 3.
\end{cases}
\]
\end{proof} 

We will now present the proof of Theorem A. The aim will be to establish a one-to-one correspondence between the sets ${\cal X}$ and ${\cal Y}$ discussed earlier.

\medskip
\noindent
{\bf Proof of Theorem A} 

\medskip
\noindent
Let $m$ be a positive integer such that $m \mid q-1$ where $q = p^k$ and $p$ is prime. Let the prime decomposition for $m= p_0^{\beta_0}p_1^{\beta_1} \ldots p_r^{\beta_r}$ where the $p_i$'s are odd primes for $i \geq 1$ and $p_0=2$. We assume that $\beta_i \geq 1$ for $i \geq 1$, $\beta_0 \geq 0$ and that $m \mid q-1$.
\smallskip

Recall that ${\cal X} = \{ [H] \mid H \leq \mbox{GL}(2,q), H \cong {\mathbb{Z}}_m \mbox{ and } H \mbox{ is reducible} \}$ and $N(m) = |{\cal X}|$. We had also defined ${\cal Y} = \{[K]_M \mid K \leq D(2,q) \mbox{ and } K \cong {\mathbb{Z}}_m \}$. Using Lemma \ref{com_red_subgrp}, we get that any reducible subgroup of ${\rm GL}(2, q)$ conjugates to a subgroup of $D(2, q)$. So in particular we note that for any $H \leq \mbox{GL}(2,q)$ such that $[H] \in {\cal X}$, there exists a $\hat{H} \leq D(2,q)$ such that $[{\hat H}]_M \in {\cal Y}$ with ${\hat H} \in [H]$.
\smallskip

Further, Lemma \ref{sbgrps_of_mono_matx}, shows that if two distinct subgroups of $D(2,q)$ are conjugated by an element of $\rm{GL}(2,q)$ then they are conjugated by an element of $M(2,q)$. Thus the map from ${\cal X}$ to ${\cal Y}$ given by $[H] \longmapsto [{\hat H}]_M$ turns out to be bijective. Hence we can conclude that ${\cal N} =|{\cal X}| = |{\cal Y}|$.
\smallskip
 
Now we establish the value of ${N(m)}$ by finding $|{\cal Y}|$, which is the same as counting the orbits of ${\cal \hat{Y}}$ (introduced in Lemma \ref{elmts_of_Fix(da)}), under the conjugation action of $M(2,q)$.
\smallskip

By Lemma \ref{red_cylc_subgrps} there exists a unique subgroup $U$ of $D(2,q)$ such that $U \cong \mathbb{Z}_m \times \mathbb{Z}_m$ and such that any $K \leq D(2,q)$ which is cyclic of order $m$, is contained in $U$. Now, any cyclic group is clearly the direct product of its Sylow subgroups. Thus $|{\hat{\cal {Y}}}| = \prod^{r}_{i=0} t_i$ where $t_i$ is the number of cyclic subgroups of order $p_i^{\beta_i}$ in $U$ and provided $\beta_0 \geq 1$. If not, the product will start at $i=1$. Clearly such a cyclic subgroup of order  $p_i^{\beta_i}$ is generated by an element of the form $dia(\lambda_1,\lambda_2)$ where $\lambda_i \in \mathbb{Z}_{m} \leq {\mathbb{F}_{q}}^*$. Further the order of one of the $\lambda_i$ is $p_i^{\beta_i}$ and the order of the other divides $p_i^{\beta_i}$. Therefore
\begin{align*}
t_i & = \frac{{(\varphi(p_i^{\beta_i}))}^2 + 2\sum_{j=1}^{\beta_i} \varphi(p_i^{\beta_i})\varphi(p_i^{\beta_i-j})}{\varphi(p_i^{\beta_i})}\\
    & = \varphi(p_i^{\beta_i}) + 2 \{\varphi(p_i^{\beta_i -1}) + \ldots + \varphi(p_i) + 1\}\\
    & = p_i^{\beta_i} + p_i^{\beta_i - 1}
\end{align*}
\noindent
where $\varphi$ is the Euler's $\varphi$-function.  Hence ${|\hat{\cal {Y}}|} = \prod^{r}_{i=0}(p_i^{\beta_i} + p_i^{\beta_i -1})$ provided $\beta_0 \geq 1$. If not, the product will start from $i=1$. By \cite[Theorem 3.22, page 58]{JR1995}, the number of orbits required
\begin{equation}{\cal{N}}\label{eq: no_of_orbts}
 = \frac{1}{2|D(2,q)|} \, \left (\sum_{d\in D(2,q)}  | {\rm Fix}(d)| + \sum_{d\in D(2,q)} |{\rm Fix}(da)|\right) \tag{$*$}
\end{equation}
Clearly $d \in D(2,q)$ fixes every element of $\hat{\cal{Y}}$. Thus $|{\rm Fix}(d)| = |\hat{\cal{Y}}|$. Also by Lemma \ref{elmts_of_Fix(da)}, we get that $ |{\rm Fix}(da)| = 1$ + Number of elements of order $2$ in $\rm{Aut}(\mathbb{Z}_m)$. So using Lemma  \ref{elmts_of_odr_2} to get $|{\rm Fix}(da)|$ and putting these values in (\ref{eq: no_of_orbts}) we get the desired result for ${N(m)}$. 

If $m = 2^{\beta_0}$ where $\beta_0 \geq 1$, then $|{\hat{\cal Y}}| = t_o$ and hence $|{\hat{\cal Y}}| = 2^{\beta_0} + 2^{\beta_0 -1} $. Again by Lemma \ref{elmts_of_Fix(da)} we get that $|{\rm Fix}(da)| = 1 +$ Number of elements of order $2$ in ${\rm Aut}(\mathbb{Z}_m)$. Thus we can use Note \ref{spcl_elmnts_of_odr_2} to  get $|{\rm Fix}(da)|$ and putting these values in (\ref{eq: no_of_orbts}) we get the desired result for ${N(m)}$.

\section{Miscellanea}\label{Misc}
There are alternative methods for finding conjugacy classes for cyclic reducible subgroups of order $m$ of ${\rm GL}(2,q)$ for small values of $m$ when $m \mid q-1$. For example, if $m$ is small then one can look at the minimal polynomials of matrices of order $m$ and pick the reducible(diagonalisable) ones.

We can also describe the generators of the cyclic subgroups of order $m$ that are representatives of the conjugacy classes of the reducible cyclic subgroups of order $m$ where $m \mid q-1$. Note that the representatives can be chosen as subgroups of $D(2,q)$. This is discussed below.

Let $\lambda \in {\mathbb{F}_q}^*$ be an element of order $m$ where $m \geq 3$ and let $M = M(2,q)$.
Let $H = \langle dia(\lambda_1,\lambda_2) \rangle$ be a subgroup of $D(2,q)$ of order $m$. Then $N_{M}(H)$ is either $D(2, q)$ or $M(2,q)$. If $N_{M}(H) = M(2,q)$ then $H = \langle dia(\lambda, \lambda^k) \rangle$ with $k^2=1\mod{m}$. 

We say $H$ is of Type ${\rm I}$ if $k = 1\mod{m}$, of Type ${\rm II}$ if $k^2 = 1\mod{m}$ but $k \neq 1 \mod{m}$ and of Type ${\rm III}$ if $N_{M}(H) = D(2,q)$. 

Now let the prime decomposition for $m= p_0^{\beta_0}p_1^{\beta_1} \ldots p_r^{\beta_r}$ where the $p_i$'s are odd primes and $p_0=2$. We assume that $\beta_i \geq 0$ for all $i$ and that at least one $\beta_j \geq 1$. We also assume that $m \mid q-1$. Define $m_i = m/{p_i}^{\beta_i} $ and $\lambda_i = \lambda^{m_i} $ where $i \in \{0, \ldots ,r \} $. Then the subgroup $H_i$ of $D(2,q)$ of order $p_i^{\beta_i}$ is of the form $H_i = \langle dia(\lambda_i, \lambda_i^{k_i}) \rangle$ where $0 \leq k_i \leq p_i^{\beta_i} -1$. So if $H\leq D(2,q)$ is a subgroup of type ${\rm III}$, then $H = \prod_{i=0}^{r} H_i$.

We summarise the above as follows.  Let $m \mid q-1$ and let $\lambda \in {\mathbb{F}_q}^*$ be an element of order $m$ where $m \geq 3$. Then the generators of the representatives of the conjugacy classes of reducible cyclic subgroups of ${\rm GL}(2, q)$ can be classified into three types as given below.

\begin{enumerate}[(a)]
\item Type ${\rm I}$: $\begin{pmatrix} 
	\lambda & 0  \\
	0 & \lambda \\ 
	\end{pmatrix}
$. \\
\item Type ${\rm II}$: $\begin{pmatrix} 
	\lambda & 0  \\
	0 & \lambda^{k} \\ 
	\end{pmatrix} $  where $k^2 = 1\mod{m}$ but $k \neq 1 \mod{m}$. \\
\item Type ${\rm III}$: $ \begin{pmatrix} 
	\lambda_0\cdots \lambda_r & 0  \\
	0 & {\lambda_0}^{k_0}\cdots {\lambda_r}^{k_r} \\ 
	\end{pmatrix}$ where $\lambda_i = \lambda^{m_i}$, $m_i = m/{p_i}^{\beta_i} $ and \newline $0 \leq k_i \leq p_i^{\beta_i}-1$ for all $i \in \{0, \ldots ,r \} $. Note only those $\lambda_i$ will occur in the product for which $\beta_i \geq 1$. 
\end{enumerate}

For $m=2$, Type ${\rm I}$ and Type ${\rm II}$ are the same and we can choose $\lambda = -1$. So the generators for the two types of representatives can be taken as $dia(-1,-1)$ or $dia(-1, 1)$.

\section*{Acknowledgements}

We would like to thank the referees for their suggestions and comments related to this paper.  \\

\noindent Prashun Kumar would like to acknowledge the UGC-JRF grant ({\emph{identification number}}: 201610088501) which is enabling his doctoral work.


\begin{thebibliography}{50}
\bibitem{B2021} Vyacheslav Aleksandrovich Belonogov, ‘Finite groups with four conjugacy classes of maximal subgroups’, III,  \emph{Trudy Inst. Mat. i. Mekh. UrO RAN} 27(1) (2021) 5--18.
\bibitem{BNV2007} S. R. Blackburn, P. M. Neumann, G. Venkataraman, \emph{Enumeration of finite Groups}, {Cambridge University Press}, 2007.
\bibitem{B1967} David M Bloom, ‘The Subgroups of $\rm{PSL}(3,q)$ for odd $q$', \emph{Transactions of the American Mathematical Society} 127 (1967) 150--178.
\bibitem{BFG1995} Rolf Brandl, Silvana Franciosi, and Francesco de Goivanni, ‘Groups with finitely many conjugacy classes of Non-Normal Subgroups’,  \emph{Royal Irish Acamdemy} 95A (1995) 17--27.
\bibitem{B2009} Rolf Brandl,  ‘Conjugacy classes of subgroups in $p$-groups’, \emph{Journal of the Australian Mathematical society} 53 (2009) 294--303.
\bibitem{DE2005} Heiko Dietrich and Bettina Eick, ‘On the groups of cube-free order’, \emph{Journal of Algebra} 292 (2005) 122--137.
\bibitem{DEP2022} H. Dietrich, B. Eick and X. Pan, ‘Groups whose orders factorise into at most four primes’, \emph{Journal of Symbolic computation} 108 (2022) 23--40.
\bibitem{D2005} Heiko Dietrich, \emph{On the groups of cube-free order}, Diploma Thesis, Brauschweig, 2005.
\bibitem{FB2004} D. L. Flannery and E. A. O' Brien, ‘The linear groups of small degree over finite fields’, \emph{International Journal of Algebra and Computation} 15 (03) (2005) 467--502.
\bibitem{H1967} B. Huppert, \emph{Endliche Gruppen I}, Springer-Verlag, Heidelberg, 1967.
\bibitem{LMS2005} Martin W Liebeck, Benjamin M. S. Martin, and Aner Shalev,  ‘On conjugacy classes of maximal subgroups of finite simple groups, and a related zeta function’, \emph{Duke Mathematical Society}  128 (2005) 541--557.
\bibitem{MTR2011} Felice Manganiello, Anna-Lena Trautmann and Jaochim Rosenthal, ‘On conjugacy classes of subgroups of the general linear group and cyclic orbit codes’, \emph{IEEE International Symposium on Information Theory} (2011) 1916--1920.
\bibitem{M2008} Alexander Mednykh, ‘Counting conjugacy classes of subgroups in a finitely generated group’, \emph{Journal of Algebra} 320 (2008) 2209--2217.
\bibitem{QL2011} S. Qiao and C. H. Li, ‘The finite groups of cube-free order’, \emph{Journal of Algebra} 334 (2011) 101--108.
\bibitem{JR1995} Joseph J. Rotman, {\emph{A course in Abstract Algebra}} (Fourth Edition), Springer-Verlag, New York 1995.
\bibitem{JSR1996} Derek J. S. Robinson, \emph{A course in the Theory of Groups} (Second Edition), Springer-Verlag, New York 1996.
\bibitem{S1992} M. W. Short, \emph{The Primitive Soluble Permutation Groups of Degree less than 256}, Springer-Verlag Heidelberg 1992.

\end{thebibliography}
\end{document}